\theoremstyle{plain}
\newtheorem{thm}{Theorem}[section]
\newtheorem{lem}[thm]{Lemma}
\newtheorem{prop}[thm]{Proposition}
\theoremstyle{definition}
\newtheorem{defn}[thm]{Definition}
\numberwithin{equation}{section}
\begin{document}
	
	\title[Weak Horseshoe with bounded-gap-hitting times]{Weak Horseshoe with bounded-gap-hitting times}

	%
	%
	\author[L. Xu J. Zheng]{Leiye Xu and Junren Zheng}
	\address{L. Xu: WU WEN-TSUN KEY LABORATORY OF MATHEMATICS, USTC, CHINESE ACADEMY OF SCIENCES AND DEPARTMENT OF MATHEMATIC,UNIVERSITY OF SCIENCE AND TECHNOLOGY OF CHINA, HEFEI, ANHUI, CHINA}

	\email{leoasa@mail.ustc.edu.cn}
	
    \address{J. Zheng: WU WEN-TSUN KEY LABOTATORY OF MATHEMATICS, USTC, CHINESE ACADEMY OF SCIENCES AND DEPARTMENT OF MATHMATIC, UNIVERSITY OF SCIENCE AND TECHNOLOGY OF CHINA, HEFEI, ANHUI, CHINA}

	\email{jrenz@mail.ustc.edu.cn}

	\subjclass[2000]{Primary  37H10; Secondary 37B25}
	
	\keywords{weak horseshoe, entropy}
	\begin{abstract}
     In this paper, we consider weak horseshoe with bounded-gap-hitting times.
    For a flow $(M,\phi)$, it is shown that if the time one map $(M,\phi_1)$ has weak horseshoe with bounded-gap-hitting times, so is $(M,\phi_\tau)$ for all $\tau\neq 0$. In addition, we prove that for an affine homeomorphsim of a compact metric abelian group, positive topological entropy is equivalent to weak horseshoe with bounded-gap-hitting times.
   \end{abstract}

	\maketitle
	
	\section{Introduction}
    Throughout this paper,
    by a {\it topological dynamical system} (TDS for short) we mean a pair $(X,T)$,
    where $X$ is a compact metric space and $T:X\rightarrow X$ is a homeomorphism.
    In the early of 1960s, Smale constructed
    the well-known horseshoe map, which showed that a structurally stable diffeomorphism
    may be very complicated \cite{S65}.
    We say that a TDS $(X,T)$ has a {\it horseshoe}
    if there exists a subsystem $(\Lambda,T^n)$ of $(X,T)$ for some $n\in\mathbb{N}$ which is
    topologically conjugate to the two-sided full shift
    $(\{0,1\}^{\mathbb{Z}},\sigma)$. We say that a TDS $(X,T)$ has a {\it semi-horseshoe}
    if there exists a subsystem $(\Lambda,T^n)$ of $(X,T^n)$ for some $n\in\mathbb{N}$ which is
    topologically semi-conjugate to the two-sided full shift.

    \medskip

   The notion of weak horseshoe first appeared in \cite{HL} (see also \cite{HY,KL07}). We introduce several kinds of weak horseshoes in the following.
   \begin{defn}\label{Weak-horshoe} Let $(X,T)$ be a TDS. Then
   \begin{enumerate}
   \item We say $(X,T)$ has {\it a weak horseshoe with positive-density-hitting times},  if there exist two disjoint closed subsets $U_0,U_1$ of $X$,  a constant $b>0$ and $J\subset  \mathbb{N}$ such that the limit
   \[\lim_{m\rightarrow +\infty}\frac{1}{m}|J\cap \{0,1,2,\cdots, m-1\}|\] exists and is larger than or equal to $b$
   (positive density), and for any $s\in
   \{0,1\}^{J}$, there exists $x_s\in X$ with
   $T^j(x_s)\in U_{s(j)}$ for any
   $j\in J$.

   \item We say $(X,T)$ has {\it a weak horseshoe with bounded-gap-hitting times},  if there exist  two disjoint closed subsets $U_0,U_1$ of $X$,  a constant $K>0$ and a subset $J=\{n_i\}_{i\in \mathbb{Z}}$ of integer numbers with
   $0<n_{i+1}-n_i<K$ for $i\in\mathbb{Z}$,
   and for any $s\in \{0,1\}^{J}$, there exists $x_s\in X$ such that
   $T^{j}(x_s)\in U_{s(j)}$ for any
   $j\in J$.

   \item We say $(X,T)$ has {\it a weak horseshoe with periodic hitting times},  if there exist  two disjoint closed subsets $U_0,U_1$ of $X$,  and $J=\{Kn:n\in \mathbb{Z}\}$ for some positive integer $K\in \mathbb{N}$ such that
   for any $s\in \{0,1\}^{J}$, there exists $x_s\in X$ with
   $T^{j}(x_s)\in U_{s(j)}$ for any
   $j\in J$.

   \end{enumerate}
   \end{defn}

   In this paper, by a {\em flow } we mean a pair $(M,\phi)$, where $M$ is a compact metric space with metric $d$ and $\phi:M\times\mathbb{R}\to X$ is a continuous map on $M\times\mathbb{R}$ satisfying
$\phi_0(x)=x$  and  $\phi_{t}\circ \phi_{s}(x)=\phi_{t+s}(x)$ for all $x\in M$ and $t,s\in \mathbb{R}$,
where $\phi_{t}(x)=\phi(x,t)$ for each $t\in \mathbb{R}$ and $x\in M$. It is known that for a flow $(M,\phi)$, the time one map $(M,\phi_1)$ has a horseshoe does not imply that $(M,\phi_\tau)$  has a horseshoe for all $\tau\neq 0$. Here is an example: Let $\sigma$ be the left shift on $\{0,1\}^\mathbb{Z}$. The {\it suspension flow} $\phi$ , which is constructed over $(\{0,1\}^{\mathbb{Z}},\sigma)$, acts in the space
$$M=\{(x,s):x\in \{0,1\}^{\mathbb{Z}},0\le s< 1\}$$
moves each point in $M$ vertically upward with unit
speed, and we identify the point $(x,1)$ with $(\sigma x,0)$ for any $x\in \{0,1\}^{\mathbb{Z}}$. More precisely, for $(x,s)\in M$ and $t\in\mathbb{R}$
$$\phi_t(x,s)=(\sigma^n x,s+t-n)$$
where $n\in\mathbb{Z}$ is the unique number such that $n\le s+t<n+1$. Then if $\tau$ is an irrational number, $(M,\phi_\tau)$  does not have a horseshoe.

    \medskip

    Given a TDS $(X,T)$, one can define its topological entropy $h_{\text{top}}(T,X)$  in the usual
    way (see \cite{AKM}). For a flow $(M,\phi)$, it is well known  that $h_{\text{top}}(\phi_t)=|t|h_{\text{top}}(\phi_1)$ for any $t\in \mathbb{R}$ (see \cite{Abr}).  In \cite{HY}, Huang and Ye showed that a TDS $(X,T)$ has positive topological entropy if and only if $(X,T)$ has a weak horseshoe with positive density hitting times (see also \cite{HL,KL07}). Thus, it is clear that for a flow $(M,\phi)$, if the time one map $(M,\phi_1)$ has a weak horseshoe with positive density hitting times then $(M,\phi_\tau)$  has the same property for all $\tau\neq 0$. Hence there is a very natural question. Is it true that the time one map $(M,\phi_1)$ has a weak horseshoe with bounded-gap-hitting times implies that $(M,\phi_\tau)$ has the same property for all $\tau\neq 0$?

    \medskip

    The main aim of this paper is to give an affirmative answer to this question. Our main result is as follows:

    \begin{thm}\label{thm-1}
		Let $(M,\phi)$ be a  flow. If $(M,\phi_1)$ has a weak horseshoe with bounded-gap-hitting times, then for all $\tau\neq0$, $(M,\phi_\tau)$ has a weak horseshoe with bounded-gap-hitting times.
	\end{thm}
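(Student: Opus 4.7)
We may assume $\tau>0$, since $\phi_{-\tau}=(\phi_\tau)^{-1}$ and the property of having a weak horseshoe with bounded-gap-hitting times is invariant under inversion of the homeomorphism: if $J$ works for $\phi_1$ with sets $U_0,U_1$, then $-J$ (still bounded-gap) works for $\phi_1^{-1}$ with the same $U_0,U_1$. Set $\delta_0=\tfrac13 d(U_0,U_1)$ and let $V_i=\{x\in M:d(x,U_i)\le\delta_0\}$ for $i=0,1$; these are disjoint closed subsets. By uniform continuity of $\phi$ on the compact set $M\times[-1,1]$, fix $\eta\in(0,1)$ such that $d(\phi_t(x),x)<\delta_0$ for every $|t|<\eta$ and $x\in M$; in particular $\phi_t(U_i)\subseteq V_i$ whenever $|t|<\eta$.

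The plan is to reduce the theorem to a combinatorial lemma: produce a bounded-gap set $J'=\{m_k\}_{k\in\mathbb Z}\subseteq\mathbb Z$, a shift $c\in\mathbb R$, and an injection $\iota:J'\to J$ with $|\tau m+c-\iota(m)|<\eta$ for every $m\in J'$. Given such data, the theorem follows by pattern transfer. For each $t\in\{0,1\}^{J'}$, define $s_t\in\{0,1\}^J$ by $s_t(\iota(m))=t(m)$ for $m\in J'$ and $s_t\equiv 0$ elsewhere; the weak horseshoe of $\phi_1$ supplies $x_{s_t}\in M$ with $\phi_n(x_{s_t})\in U_{s_t(n)}$ for every $n\in J$. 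Putting $y_t:=\phi_c(x_{s_t})$, for each $m\in J'$ we compute
\[
\phi_\tau^m(y_t)=\phi_{m\tau+c}(x_{s_t})=\phi_{m\tau+c-\iota(m)}\bigl(\phi_{\iota(m)}(x_{s_t})\bigr)\in V_{t(m)},
\]
because $\phi_{\iota(m)}(x_{s_t})\in U_{s_t(\iota(m))}=U_{t(m)}$ and $|m\tau+c-\iota(m)|<\eta$ place the image within $\delta_0$ of $U_{t(m)}$. Thus $(V_0,V_1,J')$ witnesses the weak horseshoe with bounded-gap-hitting times for $\phi_\tau$.

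The principal obstacle is the combinatorial lemma, and I expect the bulk of the work to live there. The easy subcase is $\tau<2\eta$: set $c=0$, $m_n:=\mathrm{round}(n/\tau)$ for each $n\in J$, $J'=\{m_n:n\in J\}$, and $\iota(m_n)=n$; injectivity is automatic since $\tau<1$ forces consecutive $m_n$'s to differ, $|\tau m_n-n|\le\tau/2<\eta$, and $J'$ inherits bounded gaps (at most $\lceil K/\tau\rceil$, where $K$ bounds the gap of $J$). For $\tau\ge 2\eta$ the natural choice $\iota(m)=\mathrm{round}(\tau m+c)$ demands both that $\tau m+c$ be $\eta$-close to an integer on a bounded-gap set of $m$, which one can arrange via minimality of rotation by $\tau$ on $\mathbb R/\mathbb Z$ for irrational $\tau$, or by residue-class selection of $c$ for rational $\tau=p/q$, and that this integer lie in $J$. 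The second requirement is the delicate step, because $J$ being bounded-gap in $\mathbb Z$ does not guarantee bounded-gap intersection with a fixed arithmetic progression. My approach is to enlarge the thickening by setting $V_i=\phi_{[-a,a]}(U_i)$ for a suitable $a>0$, letting $\iota(m)$ be any element of $J\cap(\tau m+c-a,\tau m+c+a)$—nonempty once $2a$ exceeds the gap bound of $J$—and thinning $J'$ to a sub-progression spaced by more than $2a/\tau$ to enforce injectivity. Ensuring that $a$ can be chosen both large enough to guarantee nonemptiness and small enough that $V_0\cap V_1=\emptyset$ is the technical heart of the argument, and when the two size constraints conflict I would exploit the identity $(\phi_\tau)^L=\phi_{L\tau}$ to pass to a larger time parameter $L\tau$ and recover the conclusion for $\phi_\tau$ on the sub-progression $L\mathbb Z$.
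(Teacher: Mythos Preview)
Your reduction to a ``combinatorial lemma'' and the pattern-transfer step are correct and are exactly how the paper proceeds. The gap is in your proposed proof of the lemma for general $\tau$.

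The enlarged-thickening route $V_i=\phi_{[-a,a]}(U_i)$ cannot be made to work: the disjointness of $V_0,V_1$ is a constraint on the flow over the \emph{entire} time interval $[-a,a]$, and nothing in the hypotheses prevents $\phi_a(U_0)\cap U_1\neq\emptyset$ once $a$ is moderately large. Your fallback $(\phi_\tau)^L=\phi_{L\tau}$ does salvage the rational case (take $L=q$ when $\tau=p/q$, so $\phi_{L\tau}=\phi_1^{p}$), but for irrational $\tau$ it changes neither the constraint $2a>K$ nor the disjointness obstruction, so the conflict persists. Likewise, the minimality of the rotation by $\tau$ on $\mathbb R/\mathbb Z$ only tells you that $\tau m+c$ is near \emph{some} integer on a syndetic set of $m$; it gives no control over whether that integer lies in $J$, which is the whole difficulty.

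The paper's device avoids both problems. Pick $\tau_0<|\tau|$ with $d(\phi_t x,x)<\delta$ for $|t|\le\tau_0$, and cover $\mathbb R$ by the finitely many sets $A_k=\bigcup_i[a_i+k\tau_0,a_i+(k+1)\tau_0]$; then $B_k=\{n:n\tau\in A_k\}$ is a finite partition of $\mathbb Z$, so by the partition regularity of piecewise syndetic sets one $B_m$ is piecewise syndetic. Now use the \emph{single} translate $V_i=\phi_{m\tau_0}(W_i)$, which is automatically disjoint since $\phi_{m\tau_0}$ is a homeomorphism --- this is the replacement for your interval-thickening. Piecewise syndetic gives a syndetic $F$ together with shifts $p_n$ so that $p_n+(F\cap[-n,n])\subset B_m$; the \emph{varying} shifts $p_n$ are exactly what you could not collapse to a single real $c$. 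The missing step in your outline is then a compactness argument in $M$: build $x_{s'}$ for the induced coloring of $J$, pass to a subsequential limit of $\phi_\tau^{\,p_{n_k}}x_{s'}$, and verify $\phi_\tau^{b_i}$ of the limit lands in $V_{s(b_i)}$.
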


In the process of studying various kinds of weak horseshoes, we notice that we can define weak horseshoe with bounded-gap-hitting times on a flow, which is similar to the discrete time case. The following is the precise definition.

    \begin{defn}\label{flow bounded gap} Let $(M,\phi)$ be a flow. We say that $(M,\phi)$ has {\it a weak horseshoe with bounded-gap-hitting times},  if there exist two disjoint closed subsets $U_0,U_1$ of $X$,  two constants $K>0$, $L>0$ and a subset $J=\{t_i\}_{i\in \mathbb{Z}}$ of real numbers such that
    $L<t_{i+1}-t_i<K$ for $i\in\mathbb{Z}$,
    and for any $s\in \{0,1\}^{J}$, there exists $x_s\in X$ with
    $\phi_{j}(x_s)\in U_{s(j)}$ for any
    $j\in J$.
    \end{defn}

By the above definition, it is not hard to see that $(M,\phi_1)$ has  a  weak horseshoe with bounded-gap-hitting times implies $(M,\phi)$ has a  weak horseshoe with bounded-gap-hitting times. By using the similar method in the proof of Theorem \ref{thm-1}, we can prove that the inverse direction also holds.

    \begin{thm} \label{thm-2} Let $(M,\phi)$ be a flow. $(M,\phi)$ has  a weak horseshoe with bounded-gap-hitting times if and only if $(M,\phi_1)$ has a weak horseshoe with bounded-gap-hitting times.
    \end{thm}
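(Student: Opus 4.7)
The \emph{only if} direction is immediate: if $J=\{n_i\}\subset\mathbb{Z}$ witnesses the weak horseshoe for $(M,\phi_1)$ with gaps $0<n_{i+1}-n_i<K$, then, viewed as a subset of $\mathbb{R}$, the same $J$ satisfies $\tfrac12<n_{i+1}-n_i<K$ and witnesses the flow version with $L=\tfrac12$. The content lies in the converse, which I would prove in two steps: first transfer the hitting property from $J\subset\mathbb{R}$ to a fine integer grid of the form $(1/N)\mathbb{Z}$, and then apply Theorem~\ref{thm-1} to a time-rescaled flow.

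\textbf{Step 1 (discretization to a $1/N$-grid).} Let $J=\{t_i\}_{i\in\mathbb{Z}}$, constants $L<K$, and disjoint closed sets $U_0,U_1$ witness the weak horseshoe for $(M,\phi)$; put $d_0=\mathrm{dist}(U_0,U_1)>0$. Since $\phi$ is continuous and $M$ compact, $(x,u)\mapsto\phi_u(x)$ is uniformly continuous on $M\times[-1,1]$, so there is $\delta\in(0,1)$ with $|u|<\delta\Rightarrow d(\phi_u(y),y)<d_0/4$ for every $y\in M$. Choose an integer $N>\max(1/L,\,1/(2\delta))$ and let $\tilde n_i\in\mathbb{Z}$ be the nearest integer to $Nt_i$. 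Then $|\tilde n_i/N-t_i|\le 1/(2N)<\delta$ and $NL-1<\tilde n_{i+1}-\tilde n_i<NK+1$, so $\tilde J:=\{\tilde n_i\}$ is a set of distinct integers with uniformly bounded gaps. Set $V_k=\{x\in M:d(x,U_k)\le d_0/4\}$ for $k=0,1$; these are closed and disjoint. Given $s\in\{0,1\}^{\tilde J}$, define $\tilde s\in\{0,1\}^{J}$ by $\tilde s(t_i)=s(\tilde n_i)$ and let $x_s:=x_{\tilde s}$ be provided by the hypothesis, so $\phi_{t_i}(x_s)\in U_{s(\tilde n_i)}$. Writing $\phi_{\tilde n_i/N}(x_s)=\phi_{\tilde n_i/N-t_i}(\phi_{t_i}(x_s))$ and invoking the uniform-continuity estimate gives $\phi_{1/N}^{\tilde n_i}(x_s)=\phi_{\tilde n_i/N}(x_s)\in V_{s(\tilde n_i)}$. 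Thus $(M,\phi_{1/N})$ has a weak horseshoe with bounded-gap-hitting times.

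\textbf{Step 2 (rescaling and Theorem~\ref{thm-1}).} The map $\tilde\phi_t:=\phi_{t/N}$ is a continuous flow on $M$ with $\tilde\phi_1=\phi_{1/N}$. By Step~1, $(M,\tilde\phi_1)$ has a weak horseshoe with bounded-gap-hitting times, so Theorem~\ref{thm-1} applied to $\tilde\phi$ at the value $\tau=N$ yields the same property for $(M,\tilde\phi_N)=(M,\phi_1)$.

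The main obstacle is Step~1, where the choice of $N$ must simultaneously overcome two one-sided lower bounds: $N>1/L$ to keep the new integer gaps positive, and $N>1/(2\delta)$ so that uniform continuity transports hitting in $U_k$ over to hitting in the enlarged sets $V_k$. These constraints are compatible, so no pigeonhole or Ramsey-style thinning of $J$ is required.
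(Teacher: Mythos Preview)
Your proof is correct and follows a genuinely different route from the paper. The paper reproves Theorem~\ref{thm-2} by replaying the combinatorial argument of Theorem~\ref{thm-1} directly: it covers $\mathbb{R}$ by the shifted intervals $[t_i+k\tau_0,t_i+(k+1)\tau_0]$, intersects with $\mathbb{Z}$ to obtain finitely many color classes $B_k$, and invokes Lemma~\ref{lem-1} to find a piecewise syndetic $B_m$, then extracts a syndetic $F$ and verifies $F\in Ind_{\phi_1}(V_0,V_1)$ by a limiting argument. You instead reduce Theorem~\ref{thm-2} to Theorem~\ref{thm-1}: round the real hitting times to a $(1/N)$-grid using uniform continuity of the flow, so that $(M,\phi_{1/N})$ acquires the discrete weak horseshoe, and then apply Theorem~\ref{thm-1} to the rescaled flow $\tilde\phi_t=\phi_{t/N}$ at $\tau=N$. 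This is more modular and makes clear that Theorem~\ref{thm-2} is essentially a corollary of Theorem~\ref{thm-1}; the pigeonhole/Ramsey step is not avoided but absorbed into the black-box appeal to Theorem~\ref{thm-1} (so your final remark that ``no Ramsey-style thinning is required'' should be read in that light). The paper's approach, by contrast, is self-contained and exhibits the same mechanism in both theorems. One minor quibble: what you label the ``only if'' direction is actually the ``if'' direction as the biconditional is stated.
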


    In \cite{HLXY}, Huang, Li, Xu and Ye showed that a TDS $(X,T)$ has a semi-horseshoe if and only if it has a weak horseshoe with periodic hitting times. Furthermore, they showed that if an automorphism $T$ on a compact metric abelian group $X$ has positive topological entropy, then it has a semi-horseshoe. In this paper, we study the dynamics of the affine homeomorphism on a compact metric abelian groups and obtain the following result.

    \begin{thm} \label{thm-3} Let $T$ be an affine homeomorphsim of a compact metric abelian group $X$.
    $(X,T)$ has positive topological entropy $($i.e., $h_{\text{top}}(T,X)>0$$)$ if and only if
    $(X,T)$ has a weak horseshoe with bounded-gap-hitting times.
    \end{thm}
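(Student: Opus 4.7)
The plan is to prove the two directions of the equivalence separately.

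For the forward direction (weak horseshoe with bounded-gap-hitting times implies positive topological entropy), neither the group nor the affine structure is needed: given $J = \{n_i\}_{i \in \mathbb{Z}}$ with $n_{i+1} - n_i < K$ and disjoint closed $U_0, U_1$ with $\varepsilon := \di(U_0, U_1)/2 > 0$, the $2^N$ binary sequences supported on $\{n_1, \dots, n_N\}$ produce $2^N$ points whose orbits are $(n_N + 1, \varepsilon)$-separated. Since $n_N \le n_1 + KN$, one obtains $h_{\mathrm{top}}(T) \ge (\log 2)/K > 0$.

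For the reverse direction, write the affine homeomorphism as $T = L_a \circ S$, where $S : X \to X$ is a continuous group automorphism and $L_a$ denotes translation by $a \in X$; fix a translation-invariant metric $d$ on $X$ (obtained by averaging any compatible metric over Haar measure). From $T^n(x) - T^n(y) = S^n(x) - S^n(y)$ for all $x, y \in X$ and $n \in \mathbb{Z}$ one deduces $d(T^n x, T^n y) = d(S^n x, S^n y)$, so the Bowen metrics coincide and $h_{\mathrm{top}}(T) = h_{\mathrm{top}}(S)$. Hence $h_{\mathrm{top}}(S) > 0$, and the theorem of Huang, Li, Xu and Ye \cite{HLXY} on automorphisms of compact abelian groups supplies a weak horseshoe with periodic hitting times for $(X, S)$: disjoint closed sets $V_0, V_1 \subseteq X$ and $K \in \mathbb{N}$ such that for every $s \in \{0,1\}^{\mathbb{Z}}$ there exists $y_s \in X$ with $S^{Kn}(y_s) \in V_{s(n)}$ for all $n \in \mathbb{Z}$.

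The crucial step is to repair the horseshoe for $S$ into one for $T$ by translating by a uniformly recurrent point of $T^K$. Choose any minimal point $c$ for the homeomorphism $T^K$ (which exists by Zorn's lemma applied to closed $T^K$-invariant subsets). Writing $T^n(x) = S^n(x) + a_n$ with $a_n = \sum_{k=0}^{n-1} S^k(a)$, a direct computation using that $S^{Kn}$ is a group homomorphism gives
\[
T^{Kn}(y_s + c) \;=\; S^{Kn}(y_s) + S^{Kn}(c) + a_{Kn} \;=\; S^{Kn}(y_s) + T^{Kn}(c),
\]
and translation invariance of $d$ yields $d\bigl(T^{Kn}(y_s + c),\, S^{Kn}(y_s) + c\bigr) = d(T^{Kn} c,\, c)$. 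Fix $\delta$ with $0 < \delta < \di(V_0, V_1)/3$; by uniform recurrence of $c$ under $T^K$, the return time set $J' := \{n \in \mathbb{Z} : d(T^{Kn} c, c) < \delta\}$ is syndetic. Setting $U_i := \overline{B_\delta(V_i + c)}$ (disjoint closed by the choice of $\delta$) and $J := K J'$ (bounded gaps since $J'$ is syndetic), any $\bar s \in \{0,1\}^J$ extends to some $\tilde s \in \{0,1\}^{\mathbb{Z}}$, and the point $x_{\bar s} := y_{\tilde s} + c$ then satisfies $T^{Kn}(x_{\bar s}) \in U_{\bar s(Kn)}$ for every $Kn \in J$, producing the desired weak horseshoe with bounded-gap-hitting times.

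The principal obstacle is the need to neutralize the cocycle $a_n$, which measures the failure of $T$ to be a homomorphism: a direct conjugation argument would require $T^K$ to have a genuine fixed point, which need not exist. The key insight is that uniform recurrence of a minimal point $c$ under $T^K$ suffices in place of a true fixed point, at the price of an arbitrarily small error; this error is precisely what forces the conclusion to be bounded-gap rather than periodic hitting times.
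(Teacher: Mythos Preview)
Your proof is correct, and the forward direction matches the paper's one-line remark. For the reverse direction, both you and the paper reduce to the automorphism $S$ via $h_{\text{top}}(T)=h_{\text{top}}(S)$ and invoke the Huang--Li--Xu--Ye semi-horseshoe for $S$, but the treatment of the cocycle $g_{Kn}=T^{Kn}(e)$ differs genuinely. The paper covers $X$ by finitely many translates $h_1W,\dots,h_kW$ of a small identity-neighborhood, colors each $n\in\mathbb{Z}$ by which $h_iW$ contains $g_{Kn}$, applies the piecewise-syndetic partition lemma (Lemma~\ref{lem-1}) to one color class, and then uses the closedness and shift-invariance of $\Sigma_T(U_0,U_1)$ to upgrade from piecewise syndetic to syndetic. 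You instead pick a minimal point $c$ for $T^K$, use the identity $T^{Kn}(y+c)=S^{Kn}(y)+T^{Kn}(c)$ to replace the cocycle by the single orbit $\{T^{Kn}c\}$, and read off the syndetic set directly as the return-time set of $c$ to a $\delta$-ball. Your route is more self-contained and conceptual---uniform recurrence does the work that the Ramsey-type coloring and the $\Sigma_T$ compactness argument do in the paper---while the paper's argument has the virtue of reusing exactly the same piecewise-syndetic machinery that drives the proofs of Theorems~\ref{thm-1} and~\ref{thm-2}.
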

\medskip
We also want to know whether or not $(X,T)$ is uniquely ergodic when  $(X,T)$ has a weak horseshoe with bounded-gap-hitting times. We give an answer to this question.

\begin{thm}  \label{thm-4} Let $(X,T)$ be a TDS. If $(X,T)$ has a weak horseshoe with bounded-gap-hitting times, then it has infinitely many minimal sets. Particularly, it is not uniquely ergodic.
\end{thm}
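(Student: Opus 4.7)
The strategy is to transplant the combinatorial richness of subshifts of $\{0,1\}^{\mathbb{Z}}$ into $(X,T)$ via the weak horseshoe witnesses. Let $U_0, U_1, K$ and $J = \{n_i\}_{i\in \mathbb{Z}}$ be as in Definition~\ref{Weak-horshoe}(2). Identifying $\{0,1\}^J$ with $\{0,1\}^{\mathbb{Z}}$ via the bijection $i \mapsto n_i$, for every $t \in \{0,1\}^{\mathbb{Z}}$ the hypothesis furnishes a witness $x_t \in X$ with $T^{n_i} x_t \in U_{t(i)}$ for all $i \in \mathbb{Z}$. By Zorn's lemma (equivalently, Birkhoff recurrence) the orbit closure $\overline{\mathrm{orb}_T(x_t)}$ contains a minimal subset $M_t$. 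The goal reduces to selecting an infinite family of $t$'s for which the $M_t$'s are pairwise distinct: since distinct minimal sets are automatically disjoint, this produces infinitely many minimal sets, and the ``not uniquely ergodic'' assertion follows at once, as each minimal set carries an invariant probability measure and measures supported on disjoint sets are mutually singular.

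Concretely, I would fix an infinite family of pairwise-disjoint minimal subshifts $\Sigma_1, \Sigma_2, \ldots \subset \{0,1\}^{\mathbb{Z}}$ — for instance, Sturmian subshifts attached to distinct irrational rotation numbers, or periodic orbits of pairwise coprime periods. For each $k$, pick an almost periodic point $t_k \in \Sigma_k$, set $x_k := x_{t_k}$, and let $M_k \subset \overline{\mathrm{orb}_T(x_k)}$ be a minimal subset; it remains to verify that $M_k \neq M_{k'}$ whenever $k \neq k'$.

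The main step — and the principal obstacle — is precisely this verification. The natural route is to attach to each minimal subset $M \subset X$ the symbolic invariant consisting of the finite patterns $(\epsilon_i)_{i \in F} \in \{0,1\}^F$ realized as $(\mathbf{1}_{U_1}(T^{n_i} y))_{i \in F}$ for some $y \in M$ and some finite $F \subset \mathbb{Z}$, and to show that this invariant includes (indeed, identifies) the language of $\Sigma_k$. The delicate point is that the coding $y \mapsto (\mathbf{1}_{U_0}(T^{n_i} y), \mathbf{1}_{U_1}(T^{n_i} y))_i$ is not continuous, so patterns visible along the orbit of $x_k$ may fail to persist when one passes to the closure and extracts a minimal subset. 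Overcoming this requires combining the positive separation of $U_0$ and $U_1$ (they are disjoint closed) with the bounded-gap rigidity of $J$: within any window the gap bound $K$ restricts how far a limit point of $\mathrm{orb}_T(x_k)$ can drift before its $U_0$- or $U_1$-membership is forced, and a diagonal/Zorn argument then forces each $\Sigma_k$-pattern of $t_k$ to materialize on some point of $M_k$. Once such rigidity is set up, $\Sigma_k \neq \Sigma_{k'}$ propagates to the symbolic invariants of $M_k$ and $M_{k'}$, yielding $M_k \neq M_{k'}$ and hence infinitely many distinct minimal sets.
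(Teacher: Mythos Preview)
Your proposal has a genuine gap at precisely the point you flag as the ``main step''. What a careful limit argument actually gives is this: if $z = \lim_\ell T^{q_\ell} x_k \in M_k$, then after passing to a subsequence so that the residue $q_\ell - n_{i_\ell} \in \{0,\ldots,K-1\}$ is constant and diagonalizing, one obtains a \emph{limit} syndetic set $J' = \{m_j\}_{j\in\mathbb{Z}}$ (still with gaps $<K$, but in general $J'\neq J$) and some $s \in \Sigma_k$ with $T^{m_j} z \in U_{s(j)}$ for all $j$. Hence the only ``invariant'' you can attach to $M_k$ is of the form ``some point of $M_k$ codes to some element of $\Sigma_k$ along \emph{some} syndetic set of gap $<K$''; nothing rules out a single minimal set $M$ satisfying this simultaneously for several $\Sigma_k$'s along several different syndetic sets, so the invariant does not separate the $M_k$'s. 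Your appeal to ``bounded-gap rigidity'' bounds the gaps of $J'$ but does not pin $J'$ down, and the unspecified ``diagonal/Zorn argument'' does not address this. Using periodic $t_k$'s does not help either: the witness $x_{t_k}$ need not be periodic, and you have no control over which minimal set its orbit closure contains.

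The paper instead argues by contradiction, and the decisive tool is the avoidance lemma of Proposition~\ref{prop-2}. Assuming only $r$ minimal sets $X_1,\ldots,X_r$, one fixes $x_i\in X_i$ and a map $\pi:X\to\{0,1\}$ with $\pi\equiv 0$ on an open neighbourhood of $U_0$ and $\pi\equiv 1$ on $U_1$, and records the full coding $a_i(j)=\pi(T^j x_i)$ for $0\le j< mK$ --- at \emph{every} time in a window of length $mK$, not just along $J$. From these $r$ words one forms, for each $j\in\mathbb{Z}$, the set $A_j\subset\{0,1\}^m$ of at most $rK$ words obtained by sampling each $a_i$ at positions $k,\,k+(n_{j+1}-n_j),\,\ldots,\,k+(n_{j+m-1}-n_j)$ for $1\le k\le K$. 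Proposition~\ref{prop-2} then supplies $a\in\{0,1\}^{\mathbb{Z}}$ with $a[j,j+m-1]\notin A_j$ for every $j$; the weak horseshoe supplies a witness $y$ for $a$, and some $X_{r_*}$ lies in $\overline{\mathrm{orb}(y)}$. Choosing $q$ with $T^qy$ close to $x_{r_*}$ and using uniform continuity over $mK$ iterates forces a length-$m$ window of $a$ to match a sampled word of $a_{r_*}$, a contradiction. Recording the minimal-set data at all times is exactly what makes this comparison work regardless of where $q$ sits relative to $J$ --- and is the idea missing from your outline.
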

    \medskip

    The paper is organized as follows. In Section 2, we will introduce a useful Lemma and prove Theorem \ref{thm-1} as well as Theorem \ref{thm-2}. In Section 3, we will prove Theorem \ref{thm-3}. In Section 4, we prove Theorem \ref{thm-4}. In Section 5, we will pose some open questions.

    \medskip

    Acknowledgement: Leiye Xu is partially supported by NNSF of China (11801538, 11871188). Junren Zheng is partially supported by NNSF of China (11971455). The authors would like to thank Wen Huang and Xiao Ma for  making many valuable suggestions.

\section{Proof of Theorem \ref{thm-1} and \ref{thm-2} }

In this section, we are to prove Theorem \ref{thm-1} and Theorem \ref{thm-2}. To prove Theorem 1.2, we need to introduce several concepts. A subset $F$ of $\mathbb{Z}$ is said to be {\it thick}   if for any $n\in \mathbb{N}$  there exists $m\in \mathbb{Z}$ such that $\{m, m+1,\cdots, m+n\}\subset F$. An infinite subset $F=\{s_i\}_{i\in \mathbb{Z}}$ of $\mathbb{Z}$ is said to be {\it syndetic} if there exists $K>0$ such that $0<s_{i+1}-s_n\le K$ for all $i\in \mathbb{Z)}$. A subset of $\mathbb{Z}$ is called {\it piecewise syndetic} if it is intersection of a thick subset of $\mathbb{Z}$ and a syndetic subset of $\mathbb{Z}$. The following Lemma is useful for our proof (see for example \cite[Theorem 1.23]{F}).
\begin{lem} \label{lem-1}
Let $\mathbb{Z}=B_1\cup B_2\cup \cdots \cup B_q$ be a partition of $\mathbb{Z}$ into finitely many sets. Then at least one of these sets is piecewise syndetic.
\end{lem}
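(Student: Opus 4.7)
The plan is to argue by contradiction: I will assume that none of $B_1,\dots,B_q$ is piecewise syndetic, then build an impossible length-$1$ interval.

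The first step I would carry out is a clean restatement of the negation of piecewise syndeticity. By definition $B\subset\mathbb{Z}$ is piecewise syndetic iff there exists $K>0$ such that for every $n$ one can find an interval of length $n$ inside which every sub-window of length $K$ meets $B$ (equivalently, consecutive elements of $B$ inside that interval are within distance $K$). Taking the contrapositive, $B$ is \emph{not} piecewise syndetic iff for every $L\in\mathbb{N}$ there exists $M=M(L,B)\in\mathbb{N}$ such that \emph{every} interval of $\mathbb{Z}$ of length $M$ contains $L$ consecutive integers disjoint from $B$. This is the working form I will use throughout.

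Next I would build a tower of scales. Set $M_0=1$. Having defined $M_{i-1}$, apply the reformulation above to $B_i$ with $L=M_{i-1}$ to obtain $M_i$ such that every interval of $\mathbb{Z}$ of length $M_i$ contains an interval of length $M_{i-1}$ disjoint from $B_i$. Since by assumption each $B_i$ fails to be piecewise syndetic, the integers $M_1<M_2<\cdots<M_q$ can all be produced.

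To finish, pick any interval $I_q\subset\mathbb{Z}$ of length $M_q$ and peel off the pieces one at a time. Inside $I_q$ lies an interval $I_{q-1}$ of length $M_{q-1}$ disjoint from $B_q$; inside $I_{q-1}$ lies an interval $I_{q-2}$ of length $M_{q-2}$ disjoint from $B_{q-1}$, and by containment also disjoint from $B_q$; iterating $q$ times produces an interval $I_0$ of length $M_0=1$ disjoint from $B_1\cup B_2\cup\cdots\cup B_q=\mathbb{Z}$, which is impossible. This contradiction forces at least one $B_i$ to be piecewise syndetic.

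The only slightly delicate point, which I would take care to write out, is the equivalence between the two formulations of piecewise syndeticity (bounded-gap windows versus unavoidable long $B$-free runs); once that reformulation is in place, the nested-intervals argument is purely combinatorial and presents no real obstacle.
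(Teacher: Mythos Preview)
Your argument is correct: the reformulation of ``not piecewise syndetic'' as ``for every $L$ there is $M$ such that every length-$M$ interval contains a $B$-free run of length $L$'' is valid (and follows easily from the thick--syndetic intersection definition used in the paper), and the nested-intervals descent $I_q\supset I_{q-1}\supset\cdots\supset I_0$ then yields the contradiction exactly as you describe.

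As for comparison with the paper: the paper does not supply its own proof of this lemma at all --- it simply quotes the result from Furstenberg's book \cite[Theorem~1.23]{F}. Your proof is in fact the classical one (essentially the argument Furstenberg gives), so there is no genuine methodological difference to discuss; you have just filled in what the paper left as a citation.
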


\begin{defn} Let $(X,T)$ be a TDS and $U_0,U_1$ be two disjoint non-empty closed subsets of $X$. We define
\begin{align*}
Ind_T(U_0,U_1)=&\{ J\subset \mathbb{Z}:  \text{   for any } s\in \{0,1\}^{J},  \text{ there exists } x_s\in X \\
 &\,\text{ with }T^{j}(x_s)\in U_{s(j)} \text{ for any } j\in J\} \text { and }\\
 \Sigma_T(U_0,U_1)=&\{ 1_J\in \{0,1\}^{\mathbb{Z}}: J\in Ind_T(U_0,U_1)\},
 \end{align*}
 where $1_J(i)=1$ if $i\in J$, and $1_J(i)=0$ if $i\not\in J$.
\end{defn}
It is clear that $\Sigma_T(U_0,U_1)$ is a non-empty $\sigma$-invariant closed subset of $\{0,1\}^{\mathbb{Z}}$. Hence, for a subset $F$ of $\mathbb{Z}$,
if for each $n\in \mathbb{N}$ we can find  $r_n\in \mathbb{Z}$ such that $$r_n+F\cap [-n,n]\in Ind_T(U_0,U_1),$$ then $F\in Ind_T(U_0,U_1)$.

Now we are ready to prove Theorem  \ref{thm-1}.
	\begin{proof}[Proof of Theorem  \ref{thm-1}] We will obtain a piecewise syndetic set by using Lemma \ref{lem-1}.
One useful obeservation is that we can get a syndetic set from a piecewise syndetic set.

Let $(M,\phi)$ be a  flow such that $(M,\phi_1)$ has a weak horseshoe with bounded-gap-hitting times. Since $\phi_1$ has a weak horseshoe with bounded-gap-hitting times, there exist two disjoint non-empty closed subsets $U_0,U_1$ of $M$, a syndetic set $J=\{a_i\}_{i\in \mathbb{Z}}$ of $\mathbb{Z}$ such that $J\in Ind_{\phi_1}(U_0,U_1)$, that is, for any $s\in \{0,1\}^{J}$, there exists $x_s\in M$ with $\phi_1^j(x_s)\in U_{s(j)}$ for any $j\in J$.
		Since $U_0,U_1$ are two disjoint closed subsets of $M$, there exists $\delta>0$ such that $W_0\cap W_1=\emptyset$, where $W_0=\overline{B(U_0,\delta)}$ and $W_1=\overline{B(U_1,\delta)}$.

Now we fix a real number $\tau\neq0$. It is clear that there exists $0<\tau_0<|\tau|$ such that $d(\phi_t(x),x)<\delta$ for all $x\in M$ and $t\in[-\tau_0,\tau_0]$.
		For $k\in\mathbb{N}$, we let $$ A_k=\bigcup_{i\in\mathbb{Z}}\big[a_i+k\tau_0,a_i+(k+1)\tau_0\big].$$
	Since $J$ is syndetic, we can find a $K\in \mathbb{N}$ such that $0<a_{i+1}-a_{i}\le K$ for all $i\in \mathbb{Z}$.
Thus there exists $N\in\mathbb{N}$ such that $\mathbb{R}=\bigcup_{k=1}^{N}A_k$.
		Let $B_k=\{n\in\mathbb{Z}: n\tau\in A_k\}$. Then $\mathbb{Z}=\bigcup_{k=1}^{N}B_k$.
		By Lemma \ref{lem-1}, there exists some $B_m,1\leq m \leq N$ such that $B_m$ is piecewise syndetic.
		Thus there exists a syndetic set $F=\{b_i\}_{i\in \mathbb{Z}}\subset\mathbb{Z}$
		such that for any $n\in \mathbb{N}$ there exists $p_n\in\mathbb{Z}$
		satisfying
		$$p_n+ \{b_{-n},\dotsc,b_{-1},b_0,b_1,\cdots,b_n\}\subset B_m.$$
		We can also require that either $\{p_n\}$ is strictly increasing and $p_n+b_n<p_{n+1}+b_{-(n+1)}$ for all $n\in \mathbb{N}$, or
         $\{p_n\}$ is strictly decreasing and $p_n+b_{-n}>p_{n+1}+b_{(n+1)}$ for all $n\in \mathbb{N}$.

        Let $V_{i}=\phi_{m\tau_0}(W_{i})$ for $i=0,1$. Since $W_0\cap W_1=\emptyset$, we have that  $V_0, V_1$ are two two disjoint non-empty closed subsets  of $M$. Now we are going to show that $F\in Ind_{\phi_\tau}(V_0,V_1)$, that is,  for any $s\in \{0,1\}^{F}$, there exists $x_s\in M$ with
$\phi_\tau^{b_i}x_s\in V_{s(b_i)}$ for all $i\in \mathbb{Z}$. This implies that $(M,\phi_\tau)$ has a weak horseshoe with bounded-gap-hitting times.

		Given $i\in \mathbb{Z}$. Since $b_i+p_n\in B_m,n\geq |i|$ by the definition of $B_m$,  we can find $j(i,n)\in \mathbb{Z}$ such that $$a_{j(i,n)}+m\tau_0\leq(b_i+p_n)\tau\leq a_{j(i,n)}+(m+1)\tau_0$$ for each $n\ge |i|$.		Since $0<\tau_0<|\tau|$, it follows that $\{a_{j(i,n)}\}_{n\ge |i|}\cap\{a_{j(i',n')}\}_{n'\ge |i'|}=\emptyset$ for any $i\ne i'\in \mathbb{Z}$.

For any  $s\in \{0,1\}^F$, we take $s'\in \{0,1\}^J$ such that $s'(a_{j(i,n)})=s(b_i)$ for any $i\in \mathbb{Z}$ and $n\ge |i|$.
Since $J\in Ind_{\phi_1}(U_0,U_1)$, there exists $x_{s'}\in M$ with
$$\phi_{a_{j(i,n)}}(x_{s'})=\phi_1^{a_{j(i,n)}}(x_{s'})\in U_{s'(a_{j(i,n)})}=U_{s(b_i)}$$
for any $i\in \mathbb{Z}$ and $n\ge |i|$.

Next we take a subsequence $\{ n_1<n_2<\cdots\}$ of $\mathbb{N}$ such that  $\lim_{k\to\infty} \phi^{p_{n_k}}_\tau x_{s'}=x_s$ for some $x_s\in M$.
		Then for $i\in\mathbb{Z}$, we have
		\begin{align*}
		d\bigl(\phi_{-m\tau_0}(\phi_\tau^{b_i} x_s), U_{s(b_i)}\bigr)
		&=\lim_{k\to\infty} d\bigl(\phi_{-m\tau_0}(\phi_\tau^{b_i+p_{n_k}} x_{s'}), U_{s(b_i)}\bigr)\\
		&=\lim_{k\to\infty} d\bigl(\phi_{\tau(b_i+p_{n_k})-m\tau_0} x_{s'}, U_{s(b_i)}\bigr)\\
		&\leq
		\limsup_{k\to\infty} d\bigl(\phi_{\tau(b_i+p_{n_k})-m\tau_0} x_{s'}, \phi_{a_{j(i,n_k)}} x_{s'}\bigr)\\
		&\qquad + \limsup_{k\to\infty} d\bigl(\phi_{a_{j(i,n_k)}} x_{s'},
		 U_{s(b_i)}\bigr)\\
		&\leq
		\limsup_{k\to\infty} \max_{0\le t\le \tau_0} d\bigl(\phi_{t+a_{j(i,n_k)}} x_{s'}, \phi_{a_{j(i,n_k)}} x_{s'}\bigr)\\
		&\qquad + \limsup_{k\to\infty} d\bigl(\phi_{a_{j(i,n_k)}} x_{s'},
		U_{s(b_i)}\bigr) \\
		&\leq
		\limsup_{k\to\infty} \max_{0\le t\le \tau_0} d\bigl(\phi_t(\phi_{a_{j(i,n_k)}} x_{s'}), \phi_{a_{j(i,n_k)}} x_{s'}\bigr)\\
		&\qquad + \limsup_{k\to\infty} d\bigl(\phi_{a_{j(i,n_k)}} x_{s'},
		 U_{s(b_i)}\bigr)\\
		&\leq \delta +0=\delta.
		\end{align*}
		Hence $\phi_{-m\tau_0} (\phi_\tau^{b_i} x_s) \in  W_{s(b_i)}$. It follows that $\phi_\tau^{b_i} x_s\in V_{s(b_i)}$. Thus we show that $F\in Ind_{\phi_\tau}(V_0,V_1)$. This finishes the proof of Theorem  \ref{thm-1}.
	\end{proof}
	
Now we are ready to prove Theorem  \ref{thm-2}.
	\begin{proof}[Proof of Theorem  \ref{thm-2}] We follow the similar arguments of the proof of Theorem \ref{thm-1}.
    Let $(M,\phi)$ be a flow having a weak horseshoe with bounded-gap-hitting times.
        Thus there exist two disjoint closed subsets $U_0,U_1$ of $X$,  two constants $K>0$,$L>0$ and a subset $J=\{t_i\}_{i\in \mathbb{R}}$ of real number such that
    $L<t_{i+1}-t_i<K$ for $i\in\mathbb{R}$,
    and for any $s\in \{0,1\}^{J}$, there exists $x_s\in X$ with
    $\phi_{j}(x_s)\in U_{s(j)}$ for any
    $j\in J$. Since $U_0,U_1$ are two disjoint closed subsets, there exists $\delta>0$ such that $W_0\cap W_1=\emptyset$, where $W_0=\overline{B(U_0,\delta)}$ and $W_1=\overline{B(U_1,\delta)}$.

It is clear that there exists $0<\tau_0<1$ such that $d(\phi_t(x),x)<\delta$ for any $x\in M$ and $|t|<\tau_0$. For $k\in\mathbb{N}$, we let
		 $$ A_k=\bigcup_{i\in\mathbb{Z}}\big[t_i+k\tau_0,t_i+(k+1)\tau_0\big].$$
	Since  $L<t_{i+1}-t_{i}\le K$ for all $i\in \mathbb{Z}$, there exists $N\in\mathbb{N}$ such that $\mathbb{R}=\bigcup_{k=1}^{N}A_k$.
		Let $B_k=\mathbb{Z}\cap A_k$. Then $\mathbb{Z}=\bigcup_{k=1}^{N}B_k$.
		By Lemma \ref{lem-1}, there exists some $B_m,1\leq m \leq N$ such that $B_m$ is piecewise syndetic.
		Thus there exists a syndetic set $F=\{b_i\}_{i\in \mathbb{Z}}\subset\mathbb{Z}$
		such that for any $n\in \mathbb{N}$, there exists $p_n\in\mathbb{Z}$
		satisfying
		$$p_n+ \{b_{-n},\dotsc,b_{-1},b_0,b_1,\cdots,b_n\}\subset B_m.$$
		We can also require that either $\{p_n\}$ is strictly increasing and $p_n+b_n<p_{n+1}+b_{-(n+1)}$ for all $n\in \mathbb{N}$, or
         $\{p_n\}$ is strictly decreasing and $p_n+b_{-n}>p_{n+1}+b_{(n+1)}$ for all $n\in \mathbb{N}$.

        Let $V_{i}=\phi_{m\tau_0}(W_{i})$ for $i=0,1$. Since $W_0\cap W_1=\emptyset$, we have that  $V_0, V_1$ are two two disjoint non-empty closed subsets  of $M$. Now we are going to show that $F\in Ind_{\phi_1}(V_0,V_1)$, that is,  for any $s\in \{0,1\}^{F}$, there exists $x_s\in M$ with
$\phi_1^{b_i} x_s\in V_{s(b_i)}$ for all $i\in \mathbb{Z}$. This implies that $(M,\phi_1)$ has a weak horseshoe with bounded-gap-hitting times.

		Given $i\in \mathbb{Z}$. Since $b_i+p_n\in B_m,n\geq |i|$ by the definition of $B_m$,  we can find $j(i,n)\in \mathbb{Z}$ such that $$t_{j(i,n)}+m\tau_0\leq(b_i+p_n)\leq t_{j(i,n)}+(m+1)\tau_0$$ for each $n\ge |i|$.		Since $0<\tau_0<1$, it follows that $\{t_{j(i,n)}\}_{n\ge |i|}\cap\{t_{j(i',n')}\}_{n'\ge |i'|}=\emptyset$ for any $i\ne i'\in \mathbb{Z}$.

For any  $s\in \{0,1\}^F$, we take $s'\in \{0,1\}^J$ such that $s'(t_{j(i,n)})=s(b_i)$ for any $i\in \mathbb{Z}$ and $n\ge |i|$.
By the assumption there exists $x_{s'}\in M$ with
$$\phi_{t_{j(i,n)}}(x_{s'})\in U_{s'(t_{j(i,n)})}=U_{s(b_i)}$$
for any $i\in \mathbb{Z}$ and $n\ge |i|$.

Next we take a subsequence $\{ n_1<n_2<\cdots\}$ of $\mathbb{N}$ such that  $\lim_{k\to\infty} \phi^{p_{n_k}}_1 x_{s'}=x_s$ for some $x_s\in M$.
		Then for $i\in\mathbb{Z}$, we have
		\begin{align*}
		d\bigl(\phi_{-m\tau_0}(\phi_1^{b_i} x_s), U_{s(b_i)}\bigr)
		&=\lim_{k\to\infty} d\bigl(\phi_{-m\tau_0}(\phi_1^{b_i+p_{n_k}} x_{s'}), U_{s(b_i)}\bigr)\\
		&=\lim_{k\to\infty} d\bigl(\phi_{b_i+p_{n_k}-m\tau_0} x_{s'}, U_{s(b_i)}\bigr)\\
		&\leq
		\limsup_{k\to\infty} d\bigl(\phi_{b_i+p_{n_k}-m\tau_0} x_{s'}, \phi_{t_{j(i,n_k)}} x_{s'}\bigr)\\
		&\qquad + \limsup_{k\to\infty} d\bigl(\phi_{t_{j(i,n_k)}} x_{s'},
		 U_{s(b_i)}\bigr)\\
		&\leq
		\limsup_{k\to\infty} \max_{0\le t\le \tau_0} d\bigl(\phi_{t+t_{j(i,n_k)}} x_{s'}, \phi_{t_{j(i,n_k)}} x_{s'}\bigr)\\
		&\qquad + \limsup_{k\to\infty} d\bigl(\phi_{t_{j(i,n_k)}} x_{s'},
		U_{s(b_i)}\bigr) \\
		&\leq
		\limsup_{k\to\infty} \max_{0\le t\le \tau_0} d\bigl(\phi_t(\phi_{t_{j(i,n_k)}} x_{s'}), \phi_{t_{j(i,n_k)}} x_{s'}\bigr)\\
		&\qquad + \limsup_{k\to\infty} d\bigl(\phi_{t_{j(i,n_k)}} x_{s'},
		 U_{s(b_i)}\bigr)\\
		&\leq \delta +0=\delta.
		\end{align*}
		Hence $\phi_{-m\tau_0} (\phi_1^{b_i} x_s) \in  W_{s(b_i)}$. It follows that $\phi_1^{b_i} x_s\in V_{s(b_i)}$. Thus we show that $F\in Ind_{\phi_1}(V_0,V_1)$. This finishes the proof of Theorem  \ref{thm-2}.
\end{proof}
	
	\section{ Proof of Theorem \ref{thm-3}}
	
    In this section we are to prove Theorem \ref{thm-3}. 	It is showed in \cite{HLXY} that a TDS $(X,T)$ has a semi-horseshoe if and only if it has a weak horseshoe with periodic hitting times. The following result also comes from \cite{HLXY}.
	
	\begin{prop}\label{prop:automorphism}
		If an automorphism $T$ on a compact metric abelian group $X$  has positive topological entropy,
then it has a semi-horseshoe.
	\end{prop}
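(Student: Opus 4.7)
The plan is to construct a weak horseshoe with periodic hitting times for $T$; by the equivalence from \cite{HLXY} recalled just before this statement, this is enough to produce a semi-horseshoe. Since $T$ is a continuous automorphism of the compact abelian group $X$, the normalized Haar measure $\mu$ on $X$ is $T$-invariant, and the variational principle gives $h_\mu(T)=h_{\text{top}}(T,X)>0$. The first concrete step is to invoke the IE-pair theory of Huang--Ye and Kerr--Li applied to $\mu$: positive measure-theoretic entropy produces a pair $(a_0,a_1)\in X\times X$ with $a_0\neq a_1$ that is an IE-pair, so that for any disjoint closed neighborhoods $U_0\ni a_0$ and $U_1\ni a_1$ one obtains a set $J\subset\mathbb Z$ of positive density with $J\in Ind_T(U_0,U_1)$.

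The heart of the argument is upgrading this positive-density independence to \emph{periodic} independence, and here the abelian-group structure must enter essentially (for a generic positive-entropy TDS no such upgrade is possible). My plan is to exploit the Pinsker decomposition specific to group automorphisms: the Pinsker $\sigma$-algebra of $(X,T,\mu)$ is $T$-invariant and, by Rokhlin--Sinai together with the uniqueness of Haar measure on quotients, corresponds to a closed $T$-invariant subgroup $H\le X$; the quotient $X/H$ carries the zero-entropy part while the $H$-cosets carry all the entropy. By Lind--Rudolph type Bernoullicity results for automorphisms of compact abelian groups, the positive-entropy fibrewise system is measure-theoretically Bernoulli. From such a Bernoulli factor one extracts independence along any arithmetic progression $K\mathbb Z$, which is the mechanism producing the period.

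The main obstacle is transferring this measure-theoretic independence back to a \emph{topological} weak horseshoe with closed target sets. Measure-theoretic Bernoullicity only furnishes positive-measure intersections, whereas we need nonempty intersections of closed sets realizing every itinerary $s\in\{0,1\}^{K\mathbb Z}$. To close this gap I would fix closed refinements of suitable Bernoulli cylinders inside $X$, apply a compactness and diagonal-subsequence extraction in the spirit of the one used in the proof of Theorem~\ref{thm-1}, and use translation-invariance of Haar measure together with a genericity argument to pass from half-infinite to two-sided itineraries. Weaving together the topological closedness of $U_0,U_1$, the measure-theoretic Bernoullicity of the K-part, and the genuine periodicity of the hitting set $K\mathbb Z$ is the most delicate technical point of the proof.
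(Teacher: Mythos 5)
You should first note that the paper does not prove this proposition at all: it is quoted from \cite{HLXY}, so there is no internal proof to compare with, and your argument must stand on its own. It does not, because the step you yourself flag as ``the most delicate technical point'' is in fact the entire content of the statement, and the plan you sketch for it cannot work as described. Measure-theoretic Bernoullicity (of the whole system, or of the positive-entropy part over the algebraic Pinsker factor) only gives that each finite itinerary corresponds to an intersection of \emph{measurable} cylinders of positive Haar measure. A weak horseshoe with periodic hitting times requires two fixed disjoint \emph{closed} sets $U_0,U_1$ such that every finite (hence, by compactness, every two-sided) itinerary along $K\mathbb{Z}$ is realized. Replacing Bernoulli cylinders by closed subsets of nearly full relative measure destroys exact independence, and the losses accumulate: for a window of length $n$ the lower bound $\prod_j\mu(P_{s(j)})-\sum_j\mu\bigl(P_{s(j)}\setminus C_{s(j)}\bigr)$ becomes vacuous as $n\to\infty$, so neither positivity of measure nor nonemptiness of long finite intersections follows, and the compactness/diagonal extraction then has nothing to feed on.

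Moreover, the obstruction is not merely technical, so no repair within your framework is possible: by Jewett--Krieger, a Bernoulli shift admits a uniquely ergodic topological model; such a model has positive entropy and is measure-theoretically Bernoulli, yet by Theorem~\ref{thm-4} of this paper it has no weak horseshoe with bounded-gap (in particular periodic) hitting times. Hence any argument that uses the abelian group structure only through measure-theoretic conclusions (algebraic Pinsker factor, Lind-type Bernoullicity) combined with soft topological tools (regularity of Haar measure, compactness, genericity) cannot prove the proposition. The group structure must be exploited at the topological level to produce \emph{exact} independence with closed sets, for instance by using Pontryagin duality and the structure of the dual $\mathbb{Z}[u^{\pm 1}]$-module to exhibit a continuous algebraic factor or subsystem (for a suitable power $T^K$) on which closed cylinder-type sets are exactly independent along $K\mathbb{Z}$; this is the mechanism behind the cited result of \cite{HLXY}, and it is precisely the step your proposal leaves unproved.
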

It is clear that if a TDS $(X,T)$ has a  weak horseshoe with bounded-gap-hitting times, then it has positive entropy. Now Theorem  \ref{thm-3} comes from the following Theorem.
	\begin{thm}\label{thm:algebra-system}
		If an affine homeomorphism $T$ on a compact metric abelian group $X$  has positive topological entropy,
		then it has a  weak horseshoe with bounded-gap-hitting times.
	\end{thm}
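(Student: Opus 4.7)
The plan is to reduce to the automorphism case handled by Proposition \ref{prop:automorphism} by writing the affine map as $T(x) = a + A(x)$ for a continuous automorphism $A$ of $X$ and some $a \in X$. Equip $X$ with a translation-invariant compatible metric $d$ (which always exists on a compact metric abelian group). An easy induction from $T(x+y) = T(x) + A(y)$ gives the affine formula
\[
T^n(x+y) = T^n(x) + A^n(y) \qquad (n\in\mathbb{Z},\ x,y\in X),
\]
hence $d(T^n(x), T^n(y)) = d(A^n(x), A^n(y))$ for all $n, x, y$. Consequently the Bowen $(n,\varepsilon)$-separated sets of $T$ and $A$ coincide, so $h_{\text{top}}(A,X) = h_{\text{top}}(T,X) > 0$. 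Proposition \ref{prop:automorphism}, together with the equivalence (from \cite{HLXY}) between a semi-horseshoe and a weak horseshoe with periodic hitting times, then supplies disjoint non-empty closed sets $U_0, U_1 \subset X$ and an integer $K \geq 1$ such that for every $s \in \{0,1\}^{\mathbb{Z}}$ there exists $x_s \in X$ with $A^{Kj}(x_s) \in U_{s(j)}$ for all $j \in \mathbb{Z}$.

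The key step is to absorb the translation part of $T$ into a uniformly recurrent base point. Choose $\delta > 0$ with $d(U_0, U_1) > 3\delta$ and, by applying Birkhoff's existence of minimal subsystems to $(X, T^K)$, pick $z \in X$ which is uniformly recurrent under $T^K$. Then the return-time set
\[
J := \{j \in \mathbb{Z} : d(T^{Kj}(z), z) < \delta\}
\]
is syndetic in $\mathbb{Z}$. Put $V_i := \overline{B(z + U_i, \delta)}$ for $i = 0, 1$; by translation invariance of $d$ these are disjoint non-empty closed subsets of $X$. Given any $s \in \{0,1\}^{KJ}$, extend it arbitrarily to a sequence on $K\mathbb{Z}$ and reindex to $s' \in \{0,1\}^{\mathbb{Z}}$ by $s'(j) := s(Kj)$ for $j \in J$, and define $y_s := z + x_{s'}$. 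The affine identity then yields, for every $j \in J$,
\[
T^{Kj}(y_s) \;=\; T^{Kj}(z) + A^{Kj}(x_{s'}) \;\in\; B(z,\delta) + U_{s'(j)} \;\subseteq\; V_{s(Kj)}.
\]
Since $KJ$ inherits bounded gaps from $J$, the pair $V_0, V_1$ together with the set $KJ$ witness a weak horseshoe with bounded-gap-hitting times for $(X,T)$.

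The main conceptual obstacle is that for positive-entropy affine $T$ the identity $0 \in X$ is typically not uniformly recurrent under $T$, so the naive attempt to push the $A$-semi-horseshoe through by translating by $-T^n(0)$ fails to yield bounded-gap recurrence. The remedy is to center the construction at an arbitrary uniformly recurrent point $z$ of $T^K$ (furnished by Birkhoff's theorem) and to exploit the exact translation formula $T^n(z+x) - T^n(z) = A^n(x)$: this transplants the full $A^K$-orbit of $x_{s'}$ onto the $T^K$-orbit of $z$ with no cumulative error, so the only slack in the hitting condition comes from the $\delta$-recurrence of $z$, which is controlled by syndeticity.
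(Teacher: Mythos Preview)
Your proof is correct and proceeds along a genuinely different line from the paper's. The paper tracks the translation cocycle $g_n=T^n(e)$, covers $X$ by finitely many translates $h_1W,\dots,h_kW$ of a small neighborhood $W$ of the identity, partitions $\mathbb{Z}$ into the color classes $B_i=\{n:g_{Kn}\in h_iW\}$, and invokes Lemma~\ref{lem-1} to find a piecewise syndetic color $B_m$; it then uses the shift-invariance of $\Sigma_T(U_0,U_1)$ to upgrade piecewise syndetic to syndetic, with target sets $U_i=\overline{h_mWV_i}$. Your argument instead fixes a translation-invariant metric, picks a uniformly recurrent point $z$ for $T^K$ via Birkhoff, and transplants the $A$-semi-horseshoe onto the $T^K$-orbit of $z$ through the exact identity $T^{Kj}(z+x)=T^{Kj}(z)+A^{Kj}(x)$; the syndetic hitting set is then simply the $\delta$-return set of $z$, with no partition-regularity lemma and no limiting argument needed. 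The paper's route has the virtue of reusing the same combinatorial engine (Lemma~\ref{lem-1}) that drives Theorems~\ref{thm-1} and~\ref{thm-2}, and it never needs to change the metric; your route is shorter and more transparent in how the affine structure is used, at the modest cost of invoking the existence of an invariant metric and of a minimal subsystem.
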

	\begin{proof} Let $T(x)=gA(x)$ for all $x\in X$, where $A$ is an automorphism of $X$ and $g\in X$. It is known that $h_{\text{top}}(T)=h_{\text{top}}(A)>0$ (see \cite{B}). Thus by proposition \ref{prop:automorphism}, $A$ has a semi-horseshoe. Hence  $A$ has a weak horseshoe with periodic hitting times, that is,
there exist two disjoint closed subsets $V_0,V_1$ of $X$, and $J=\{Kn:n\in \mathbb{Z}\}$ for some positive integer $K\in \mathbb{N}$ such that for any $s\in \{0,1\}^{J}$, there exists $x_s\in X$ with $A^{j}(x_s)\in V_{s(j)}$ for any $j\in J$, i.e., $J\in Ind_A(V_0,V_1)$.

Let $W$ be an open neighborhood of $e$ where $e$ is the identity element of $X$ such that
$\overline{WV_0}\cap \overline{WV_1}= \emptyset$. Note that
$\mathcal{U}=\{hW: h\in X\}$ is an open cover of $X$. Since $X$ is compact, there exist $h_1,\cdots,h_k\in X$ such that $\bigcup_{i=1}^k h_iW=X$.
For $n\in \mathbb{Z}$, we let $g_n=T^n(e)$ where $e$ is the identity of $X$. It is not hard to see that
$T^n(x)=g_nA^{n}(x)$ for any $x\in X$. Let $B_i=\{n\in \mathbb{Z}: g_{Kn}\in h_iW\}$. Then
$\mathbb{Z}=\cup_{i=1}^{k}B_i$ since $\bigcup_{i=1}^k h_iW=X$.
By lemma \ref{lem-1}, there exists  $1\le m \le k$ such that $B_m$ is piecewise syndetic.
Therefore, there exists a syndetic set $F=\{b_i\}_{i\in \mathbb{Z}} \subset\mathbb{Z}$
such that for any $n\in \mathbb{N}$ there exists $p_n\in\mathbb{Z}$
satisfying
$$p_n+ \{b_{-n},\cdots, b_{-1},b_0,b_1,\dotsc,b_n\}\subset B_m.$$
Let $U_i=\overline{h_mWV_i}$ for $i=0,1$. $U_0$, $U_1$ are two disjoint closed subsets of $X$ since $\overline{WV_0}\cap \overline{WV_1}= \emptyset$. In the following we are to show that $KF\in Ind_T(U_0,U_1)$. This implies that $(X,T)$ has a  weak horseshoe with bounded-gap-hitting times, since $KF$ is a syndetic subset of $\mathbb{Z}$.

To show that $KF\in Ind_T(U_0,U_1)$, it is sufficient to show that for each $n\in \mathbb{N}$, $J_n\in Ind_T(U_0,U_1)$, where $J_n=K(p_n+ \{b_{-n},\cdots, b_{-1},b_0,b_1,\dotsc,b_n\})$. Given $n\in \mathbb{N}$, then $g_{j}\in h_mW$ for all $j\in J_n$. For any  $s\in \{0,1\}^{J_n}$, we take $s'\in \{0,1\}^J$ such that $s'(j)=s(j)$ for any $j\in J_n$.  Since $J\in Ind_A(V_0,V_1)$, there exists $x_{s'}\in X$ with
$A^{j}(x_{s'})\in V_{s'(j)}=V_{s(j)}$ and so $T^{j}(x_s')=g_jA^{j}(x_{s'})\in h_mWV_{s(j)}\subset U_{s(j)}$  for all $j\in J_n$.
Thus $J_n \in Ind_T(U_0,U_1)$. This finishes the proof of Theorem \ref{thm:algebra-system}.
	\end{proof}

\section{Proof of Theorem  \ref{thm-4}}
In this section we are to prove Theorem \ref{thm-4}. Firstly we need the following result, which is essentially Theorem 5.12 in \cite{HLY}.
\begin{prop}  \label{prop-2} Let $p,\ell\in \mathbb{N}$ with $p\ge  2$. For any integer $m \ge 4\ell + 2$, given any
sequence $\{A_n\}_{n\in \mathbb{Z}}$ of subsets of $\{0,1,\cdots,p-1\}^m$ with $|A_n| \le \ell$  for each $n\in \mathbb{Z}$, there exists
$x\in \{0,1,\cdots,p-1\}^{\mathbb{Z}} $ such that $x[n, n + m-1] \not \in  A_n$ for every $n\in \mathbb{Z}$.
\end{prop}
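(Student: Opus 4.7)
My plan is to reduce to finite intervals by compactness and then apply the symmetric Lov\'asz Local Lemma (LLL). The space $\{0,1,\dots,p-1\}^{\mathbb{Z}}$ is compact in the product topology and each set $\{y : y[n,n+m-1]\notin A_n\}$ is clopen (it is a finite union of cylinders, and its complement is too). So it will suffice to produce, for every $N\in\mathbb{N}$, a sequence $x_N\in\{0,1,\dots,p-1\}^{\mathbb{Z}}$ that satisfies $x_N[n,n+m-1]\notin A_n$ for all $|n|\le N$, and then take a subsequential limit: whenever $N_k\ge |n|$ the point $x_{N_k}$ lies in the clopen set $\{y:y[n,n+m-1]\notin A_n\}$, so the limit lies there too.

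For the finite version, I would sample $x$ uniformly at random from $\{0,1,\dots,p-1\}^{[-N,N+m-1]}$ (extending arbitrarily outside) and, for each $n\in[-N,N]$, define the bad event $E_n=\{x[n,n+m-1]\in A_n\}$. Then $\Pr(E_n)\le \ell/p^m$. Since $E_n$ is determined only by the values $x(n),x(n+1),\dots,x(n+m-1)$, two events $E_n,E_{n'}$ are mutually independent whenever $|n-n'|\ge m$, so the dependency graph has maximum degree $d\le 2(m-1)$. The symmetric LLL then yields $\Pr\bigl(\bigcap_{n}\bar{E}_n\bigr)>0$ provided $e\cdot(\ell/p^m)\cdot(2m-1)\le 1$, i.e.\ $p^m\ge e\ell(2m-1)$. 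For $p\ge 2$ and $m\ge 4\ell+2$ this becomes $4\cdot 16^{\ell}\ge e\ell(8\ell+3)$, which I would verify by a short induction on $\ell$: the base case $\ell=1$ gives $64\ge 11e\approx 29.9$, and the left side multiplies by $16$ when $\ell$ increases by $1$ while the right side grows only quadratically. This provides the desired $x_N$, and compactness finishes the proof.

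The main obstacle I anticipate is arithmetic rather than conceptual: matching the LLL threshold $p^m\gtrsim e\ell(2m-1)$ against the precise hypothesis $m\ge 4\ell+2$. LLL by itself only needs $m\gtrsim \log_p(e\ell m)$, so the hypothesis leaves comfortable slack, but the constants still need to be pinned down, especially at $p=2$ and small $\ell$. A secondary point requiring care is the infinite-to-finite reduction: one must make sure that the truncated problem on $[-N,N+m-1]$ captures every constraint $A_n$ with $|n|\le N$ in full (the windows extend up to position $N+m-1$, which is why I take the larger interval), and that the LLL is applied only to this finite family so that the passage to the bi-infinite limit is legitimate.
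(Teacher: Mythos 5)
Your argument is correct, and it is genuinely different from what the paper does: the paper offers no proof of this proposition at all, simply quoting it as ``essentially Theorem 5.12 in [HLY]'', whereas you give a self-contained probabilistic proof. Both halves of your plan check out. The compactness reduction is sound because each set $\{y: y[n,n+m-1]\notin A_n\}$ is determined by finitely many coordinates, hence clopen, so a subsequential limit of your finite solutions $x_{N_k}$ satisfies every constraint. In the finite step, $E_n$ depends only on the coordinates $n,\dots,n+m-1$, so $E_n$ is mutually independent of the whole family $\{E_{n'}:|n-n'|\ge m\}$, the dependency degree is at most $2(m-1)$, and the symmetric local lemma condition $e(\ell/p^m)(2m-1)\le 1$ is exactly what you wrote. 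The numerics also work: since $2^m/(2m-1)$ is increasing in $m$ (a one-line check you should make explicit, as the worst case is $m=4\ell+2$), it suffices to verify $4\cdot 16^{\ell}\ge e\ell(8\ell+3)$, and your induction on $\ell$ with base case $64\ge 11e$ does this. The trade-off between the two routes is the usual one: the citation to [HLY] keeps the paper short and inherits their purely combinatorial argument, while your LLL-plus-compactness proof is nonconstructive but elementary, self-contained, and makes transparent that the hypothesis $m\ge 4\ell+2$ is far from tight --- the local lemma only needs $m$ of order $\log_p(\ell m)$, so the same argument proves a stronger statement with essentially no extra work.
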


We are ready to proof Theorem  \ref{thm-4}.
\begin{proof}[Proof of Theorem  \ref{thm-4}] Let $(X,T)$ have a weak horseshoe with bounded-gap-hitting times. Then there exist  two disjoint closed subsets $V_0,V_1$ of $X$,  a constant $K>0$ and a subset $J=\{n_i\}_{i\in \mathbb{Z}}$ of integer number such that
   $0<n_{i+1}-n_i<K$ for $i\in\mathbb{Z}$,  and $J\in Ind_T(V_0,V_1)$, that is, for any $s\in \{0,1\}^{J}$, there exists $x_s\in X$ with
   $T^{j}(x_s)\in V_{s(j)}$ for any $j\in J$.

Next we take two disjoint open subsets $U_0$ and $U_1$ of $X$ such that $V_0\subset U_0$ and $V_1\subset U_1$. Then let  $\pi: X\to \{0,1\}$ such that $\pi(x)=0$ if $x\in U_0$ and $\pi(x)=1$ if $x\in U_0^c$. Next we argue by contradiction. Assume that $(X,T)$ has only finitely many minimal sets (named by $X_1,X_2,\cdots, X_r$).

Let $m\ge 4\ell+2$ be as in Proposition \ref{prop-2} for $p=2$ and $l=rK$. For $i=1,2,\cdots,r$, we take $x_i\in X_i$ and $a_i\in \{0,1\}^{mK}$ such that $a_i(j)=\pi(T^jx_i), j=0,1,\cdots mK-1$. For $j\in \mathbb{Z}$, we set $A_j$ to be the subset of $\{0,1\}^{m}$ containing the elements of the form $$(a_i(k),a_i(k+n_{j+1}-n_j),\cdots,a_i(k+n_{j+m-1}-n_j))$$ for $1\le k\le K$ and $1\le i\le r$. Then $|A_j|\le rK=l$ for all $j\in\mathbb{Z}$.

By Proposition \ref{prop-2}, we can take $a\in \{0,1\}^{\mathbb{Z}}$ such that $a[j,j+m-1]\notin A_j$ for all $j\in\mathbb{Z}$. Since $J\in Ind_T(V_0,V_1)$, we can find $y\in X$ such that $T^{n_i}y\in V_{a(i)}$ for all $i\in\mathbb{Z}$. Notice that the closure of the orbit  of $y$ containing at least one minimal set $X_{r_*}$ for some $1\le r_*\le r$. Take $\epsilon>0$ such that $$\epsilon<\min_{z_1\in V_0,z_2\in U_0^c}d(z_1,z_2)\text{ and }\epsilon<\min_{z_1\in V_1,z_2\in U_1^c}d(z_1,z_2).$$ Take $\delta>0$ small enough such that when $z_1,z_2\in X$ with $d(z_1,z_2)<\delta$, one has $d(T^nz_1,T^nz_2)<\epsilon$ for $0\le n\le mK$. Since $X_{r_*}$ is containing in the closure of the orbit  of $y$, there exists some $q\in \mathbb{Z}$ such that $d(T^qy,x_{r_*})<\delta$. Take $j_*\in \mathbb{Z}$ such that  $n_{j_*-1}< q\le n_{j_*}$. Set $k_*=n_{j_*}-q.$ For $0\le j'\le m-1$, since $d(T^qy,x_{r_*})<\delta$ and $0\le k_*+n_{j_*+j'}-n_{j_*}\le mK$, one has $d(T^{k_*+n_{j_*+j'}-n_{j_*}}x_{r_*},T^{k_*+n_{j_*+j'}-n_{j_*}+q}y)<\epsilon$ which implies $T^{k_*+n_{j_*+j'}-n_{j_*}}x_{r_*}\in U_{a(j_*+j')}$ by the fact $T^{k_*+n_{j_*+j'}-n_{j_*}+q}y=T^{n_{j_*+j'}}y\in V_{a(j_*+j')}$. Then $$a_{r_*}(k_*+n_{j_*+j'}-n_{j_*})=\pi(T^{k_*+n_{j_*+j'}-n_{j_*}}x_{r_*})=a(j_*+j')$$ for $0\le j'\le m-1$. That is \begin{align*}&\hskip0.5cm (a(j_*),a(j_*+1),\cdots,a(j_*+m-1))\\
&=(a_{r_*}(k_*),a_{r_*}(k_*+n_{j_*+1}-n_{j_*}),\cdots,a_{r_*}(k_*+n_{j_*+m-1}-n_{j_*}))\in A_{j_*},
 \end{align*}
 which is a contradiction. This finishes the proof of Theorem  \ref{thm-4}.
\end{proof}

	\section{ Some related open problems}
In this section, we are going to mention some related open problems. First of all, by the definition, it is easy to see that  $(X,T)$ has a weak horseshoe with periodic hitting times implies that $(X,T)$ has a weak horseshoe with bounded-gap-hitting times, and $(X,T)$ has a weak horseshoe with bounded-gap-hitting times implies that $(X,T)$ has a weak horseshoe with positive-density-hitting times. Thus it is a natural question whether or not  $(X,T)$ has  a weak horseshoe with bounded-gap-hitting times implies that $(X,T)$ a weak horseshoe with periodic hitting times.

Anther open problem is that whether or not weak horseshoe with bounded-gap-hitting times is an invariant of equivalent flows. Two flows defined on a smooth manifold are equivalent if there exists a homeomorphism of the manifold that sends each orbit of one flow onto an orbit of the other flow while preserving the time orientation. In \cite{ohno}, Ohno constructed a counterexample of equivalent flows with fixed points to indicate that neither zero nor infinity topological entropy is preserved by equivalence.  Furthermore, in \cite{sun}, the authors proved that zero topological entropy is not an invariant of equivalent differentiable flow. However, we do not know whether weak horseshoe with bounded-gap-hitting times is an invariant of equivalent flows.

\end{document}